\title{Computing mixed Schatten norm of completely positive maps}
\author{\text{Mohammad ShahverdiKondori}$^1$ \hspace{0.5cm} \text{Siu On Chan}$^2$ \\ \\
$^1$ \text{Sharif university of technology} \hspace{0.3cm}  $^2$ \text{CUHK Hong Kong}}
\date{} 
\newcommand{\pqnorm}[1]{ \left \lVert #1 \right \rVert _{p \rightarrow q}}
\newcommand{\pnorm}[2]{ \left \lVert #1 \right \rVert _ {#2} }
\newcommand{\fa}{ \frac{\pnorm{Ax}{q}}{\pnorm{x}{p}} }
\newcommand{\fain}{{\pnorm{Ax}{q} \big / \pnorm{x}{p}}}
\newcommand{\sAx}[1]{J_{p^*}(A^TJ_q(A#1))}
\newcommand{\phicn}{\Phi : \C^{n\times n} \rightarrow \C^{m\times m}} 
\newcommand{\R}{\mathbb{R}}
\newcommand{\C}{\mathbb{C}}
\newcommand{\cpm}{completely positive map}
\newcommand{\spsq}[1]{S_p \rightarrow S_q }
\newcommand{\spqnorm}[1]{\left \lVert #1 \right \rVert _{S_p \rightarrow S_q}}
\newcommand{\spnorm}[2]{\left \lVert #1 \right \rVert _ {S_{#2}}}
\newcommand{\fphi}{ \frac{\spnorm{\Phi(A)}{q}}{\spnorm{A}{p}} }
\newcommand{\fphin}{\spnorm{\Phi(A)}{q} / \spnorm{A}{p} }
\newcommand{\sphi}[1]{J_{S_{p^*}}(\Phi^*J_{S_q}(\Phi(#1)))}
\newcommand{\hn}[1]{H^n_+}
\newcommand{\psd}[1]{positive semidefinite}
\newcommand{\grchain}[2]{#1_1 \geq #1_2 \geq \cdots \geq #1_#2}
\newcommand{\inner}[2]{\left < #1,#2 \right >}
\newcommand{\eigdec}[1]{Q \Lambda ^{#1} Q ^ \dagger}
\newcommand{\js}[2]{J_{S_#2}(#1)}
\newcommand{\fuin}[1]{fully indecomposable}
\newtheorem{thm}{Theorem}[section]
\newtheorem{defi}{Definition}[section]
\newtheorem{lemma}{Lemma}[section]
\newtheorem{corollary}{Corollary}[section]
\begin{document}

\maketitle

\begin{center}
    \textbf{Abstract}
\end{center}
Computing $p \rightarrow q$ norm for matrices is a classical problem in computational mathematics and power iteration is a well known method for computing $p \rightarrow q $ norm for a matrix with nonnegative entries. Here we define an equivalent iteration method for computing $ S_p \rightarrow S_q $ norm for completely positive maps where $S_p$ is the Schatten $p$ norm. We generalize almost all of definitions, properties, lemmas etc. in the matrix setting to completely positive maps and prove an important theorem in this setting. 

\section{Introduction} 
Given a matix $ A  \in \R ^{m \times n} $ we define $\pqnorm{A}$ as 

$$ \pqnorm{A}=\max \fa $$ 
where the $\pnorm{ . }{p}$ and $\pnorm{ . }{q}$ are $ \ell ^p $ norms. Computing $\pqnorm{A}$ is a classical problem in computational mathematics.The best known method for this is a nonlinear power method, introduced by Boyd in \cite{boyd 74} and then further analyzed and extended for instance in \cite{4 computing, 18 computing, 31 computing, 48 computing}. However the best known result about this problem is the following 
\begin{thm}[Theorems 3.2 and 3.3, \cite{26 of computing}]\label{thm 1}
Let $A \in  \R^{ m \times n} $ be a matrix with nonnegative entries
and suppose that $A^TA$ has at least one positive entry per row. If $1 < q \leq p < \infty$, then, every
positive critical point of ${f_A(x) = \fain}$ is a global maximizer. Moreover, if either $p > q$ or $A^TA$ is irreducible,
then $f_A$ has a unique positive critical point $x^+$ and the power sequence 
$$x_0 = \frac{x_0}{\hspace{0.1 in}\pnorm{x_0}{p}} , \quad x_{k+1} = \sAx{x_k}, \quad k=0,1,2,\cdots $$
converges to $x^+$ for every positive starting point.
\end{thm}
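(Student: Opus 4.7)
The plan is to reformulate the positive critical-point condition as a fixed-point equation for the map $S(x) := \sAx{x}$, and then deploy the nonlinear Perron--Frobenius / Hilbert projective metric machinery. A direct gradient computation, using the componentwise formula $J_r(y)_i = |y_i|^{r-1}\mathrm{sgn}(y_i)$ and the identity $\nabla \pnorm{y}{r} = \pnorm{y}{r}^{1-r} J_r(y)$, shows that a positive critical point $x$ of $f_A$ satisfies $A^T J_q(Ax) = \lambda\, J_p(x)$ for some $\lambda > 0$; applying $J_{p^*}$ turns this into $S(x) = \mu x$. Hence positive critical points are exactly positive fixed points of the iteration in the theorem (up to a scale that the homogeneity degree pins down).

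Next I would establish three structural properties of $S$ that put us squarely in the nonlinear Perron--Frobenius regime. First, the hypotheses that $A \geq 0$ and that each row of $A^T A$ has a positive entry give $S(\R^n_{>0}) \subseteq \R^n_{>0}$, so the iteration never escapes the open positive cone. Second, $S$ is order-preserving on this cone, since $J_q$, $A^T$, and $J_{p^*}$ each are. Third, $S$ is positively homogeneous of degree $(q-1)/(p-1) \leq 1$, with equality precisely when $p = q$. The Hilbert projective metric $d_H$ on $\R^n_{>0}$ (equivalently, logarithmic coordinates) is the natural metric here: homogeneity of degree $\alpha$ contracts $d_H$ by $\alpha$, and Birkhoff's theorem bounds the contraction ratio of the linear pieces $A$ and $A^T$ by $\tanh(\Delta/4)$ where $\Delta$ is the projective diameter of the image. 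For the ``positive critical point implies global maximizer'' statement, one shows that $\log f_A$ is concave along Hilbert-geodesics on the positive cone, so a critical point there is globally optimal among positive $x$; the extension to arbitrary $x$ uses that $A \geq 0$ implies $\pnorm{A|x|}{q} \geq \pnorm{Ax}{q}$ while $\pnorm{|x|}{p} = \pnorm{x}{p}$, so the supremum of $f_A$ is attained on the positive cone.

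For uniqueness and convergence, the normalized iteration $\tilde S(x) := S(x)/\pnorm{S(x)}{p}$ is a self-map of the compact positive portion of the $\ell^p$-sphere, and in $d_H$ it inherits a strict contraction constant from the two sources above. If $p > q$, the homogeneity degree is strictly less than $1$ and delivers the contraction directly. If $p = q$ but $A^T A$ is irreducible, the homogeneity factor equals $1$, so strict inequality must be extracted from Birkhoff: irreducibility guarantees that after one full $S$-step the iterate lies in a region of finite projective diameter, forcing the Birkhoff ratio below $1$. In either case the Banach fixed-point theorem, applied in the complete metric space $(\R^n_{>0}/\sim, d_H)$, produces a unique positive fixed point $x^+$ and geometric convergence of the power sequence from any positive start.

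The main obstacle, in my view, is the contraction estimate in the borderline case $p = q$: one must verify that the composition $J_{p^*} \circ A^T \circ J_q \circ A$, whose homogeneity is exactly $1$, is nevertheless a strict $d_H$-contraction under irreducibility of $A^T A$. This requires tracking how the nonlinear duality maps interact with Birkhoff's linear contraction coefficient and ruling out the degenerate scenarios in which the Hilbert diameter of the image could fail to be finite. Once that estimate is secured, the existence, uniqueness, and convergence claims follow from standard contraction-mapping arguments, and the global-maximizer conclusion is a corollary of the log-concavity of $f_A$ along Hilbert geodesics together with the $|x|$-reduction to positive vectors.
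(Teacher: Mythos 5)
First, a point of comparison: the paper does not prove this statement at all --- it is quoted verbatim as Theorems 3.2 and 3.3 of \cite{26 of computing} and used as a black box to motivate the Schatten-norm generalization. So there is no in-paper proof to measure your sketch against; what matters is whether your outline would actually reconstruct the cited result. The overall strategy you describe (critical points as fixed points of $S(x)=\sAx{x}$, invariance and monotonicity of $S$ on the positive cone, subhomogeneity of degree $(q-1)/(p-1)$, and Banach contraction in the Hilbert metric) is indeed the right family of ideas and matches the machinery the present paper deploys in the Schatten setting (Lemmas \ref{lem1}--\ref{lem5}). But two of your steps have genuine gaps.

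The first gap is the ``positive critical point implies global maximizer'' claim. Your proposed route is that $\log f_A$ is concave along Hilbert geodesics; in logarithmic coordinates $x=e^u$, however, both $u\mapsto\log\pnorm{Ae^u}{q}$ and $u\mapsto\log\pnorm{e^u}{p}$ are convex (each is a composition of log-sum-exp functions), so $\log f_A$ is a \emph{difference} of convex functions and there is no reason for it to be concave. The hypothesis $q\le p$ must enter in an essential way here, and in \cite{26 of computing} it does, via a Collatz--Wielandt-type ratio bound rather than geodesic concavity; as written, this part of your argument does not go through. The second gap is exactly the one you flag: in the borderline case $p=q$ the homogeneity degree is $1$, and your fallback --- that irreducibility of $A^TA$ forces the Birkhoff contraction ratio below $1$ --- is not justified. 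Irreducibility of a nonnegative matrix does not imply finite projective diameter of its image (irreducible matrices can act as Hilbert-metric isometries), so one application of $S$ need not be a strict contraction, and the interleaved nonlinear maps $J_q, J_{p^*}$ prevent you from simply passing to a primitive power of $A^TA$. This is precisely why the cited reference handles $p=q$ through a multihomogeneous Perron--Frobenius uniqueness argument rather than a one-step Banach contraction, and it is also why the present paper's Schatten-norm analogue (Theorem \ref{main thm} and its corollary) only establishes convergence for $p>q$, leaving $p=q$ open. Your $p>q$ contraction argument and the reduction to the positive cone via $\pnorm{A|x|}{q}\ge\pnorm{Ax}{q}$ are sound, but the two steps above are the actual content of the theorem and need different arguments.
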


We can consider completely positive linear maps between complex matrix spaces (see \cite{cpm pro}) as a generalization for positive matrices so it's interesting to investigate this results for \cpm s.That means in this note we are trying to find $\spsq$$ norm of a \cpm \vspace{0.1cm} that defines as follows 
$$\spqnorm{\Phi} = \max \fphi$$ 
where the max function is on the set of all $ n \times n $ Hermitian matrices and $S_p$ and $S_q$ are Schatten norms.

We generalize the power method that is defined for positive matrices, to \cpm s and prove this method converges to the value of $\spsq$$ norm.


\section{$H^n_+$ cone and \cpm s preliminaries}
In this section we talk about positive semidefinite cone and \cpm s properties. Le $H^n$ be the set of all Hermitian $n \times n $ matrices and $H^n_+$ (resp. $H^n_{++}$) be the set of positive semidefinite (resp. positive definite) Hermitian $n \times n $ matrices. Also we use $A^\dagger $ for representing conjugate transpose of matrix $A$.  

\begin{defi}(\cpm)
Let $\phicn$ be a function on the set of complex matrices then $\Phi$ is a \cpm \vspace{0.1cm} if and only if there exist matrices $V_i \in \C^{m\times n}$ such that 
$$\Phi(A) = \sum_{i=1}^{k}{V_iAV_i^{\dagger}} , \quad k \leq nm$$
Also represent the transpose of $\Phi$ by $\Phi^*$ as 
$$\Phi^* : \C^{m \times m} \rightarrow \C^{n \times n}, \quad \Phi^*(A) = \sum_{i=1}^k {V_i^\dagger A V_i}$$
\textbf{note.} It's not the original definition but these are equivalent (see \cite{cpm pro})
\end{defi}
For a completely positive map we define $\spsq{}$ norm as 
$$\spqnorm{\Phi} = \max \fphi$$
where $S_p$ is the Schatten $p$-norm and defines as 

\begin{equation*}
    \begin{split}
        \spnorm{A}{p} & = tr(|A|^p)^{\frac{1}{p}}\\
        & = \left ( \sum_{i=1}^{n}{\sigma_i(A)}^p \right ) ^ \frac{1}{p}
    \end{split}
\end{equation*}
where $|A| = \sqrt{A^{\dagger}A}$ and $\sigma_i$-s are singular values of A.\\
\begin{defi}(proper cone)
A cone K in a vector space on field $\R$ is called proper cone if it satisfies these conditions: 
\begin{itemize}
    \item K is convex.
    \item K is closed.
    \item K is solid, which means it has nonempty interior.
    \item K is pointed, which means that it contains no line (or equally $ x \in K, -x \in K \Rightarrow x = 0) $ 
\end{itemize}
\end{defi}
By \cite{psd cone} we know that $H^n_+$ is a proper cone in space of complex matrices so we can define a partial ordering on matrices with respect to $H^n_+$ (for instance see \cite{boyd convex},chapter2). So we use the notation $A\succeq  0 $ for saying $A$ is positive semidefinite and $A \succ 0 $ means $A$ is positive definite also $A\succeq  B $ means $A-B $ is positive semidefinite or equally $A-B \in \hn{} $. Also for vectors $x$ and $y$ the notation $x\geq y $ means $x-y \in \R^n_+$ (respectively $x>y$ means $x-y \in \R^n_{++}$).
\begin{defi}
For a \psd{} matrix $A$ let $A = Q\Lambda Q^\dagger $ be the eigendecomposition of $A$ then for a positive real number $p$ we define $A^p$ as 
$$A^p = Q\Lambda^p Q^\dagger $$
where $\Lambda ^ p$ is the diagonal matrix that has $p$-th power of eigenvalues of $A$ as it's diagonal entries.
\end{defi}
\begin{lemma}\label{lem 2.1}
For $A,B \in \hn{}$ and \cpm \vspace{0.1cm} $\Phi $ we have the following properties :
\begin{enumerate}
    \item If $A\succeq B$ and consider $\grchain{\lambda}{n}$ and $\grchain{\gamma}{n}$ are $A'$s and $B'$s eigenvalues respectively then we have $\forall i : \lambda_i\geq \gamma_i $.  
    \item If $A\succeq B$ then $\Phi(A)\succeq \Phi(B)$ .
\end{enumerate}
\end{lemma}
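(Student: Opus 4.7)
The plan is to prove the two parts separately, since they rely on fairly different ideas, though both are standard results in matrix analysis.

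For part (1), I would invoke the Courant--Fischer min--max characterization of eigenvalues for Hermitian matrices: for any $M \in H^n$ with eigenvalues $\mu_1 \geq \mu_2 \geq \cdots \geq \mu_n$,
\begin{equation*}
\mu_i(M) = \max_{\substack{S \subseteq \C^n \\ \dim S = i}} \min_{\substack{x \in S \\ \pnorm{x}{2}=1}} x^\dagger M x.
\end{equation*}
Since $A \succeq B$ means $A - B \in H^n_+$, for every unit vector $x$ we have $x^\dagger A x \geq x^\dagger B x$. Fixing any $i$-dimensional subspace $S$ and taking the minimum of the Rayleigh quotient over unit vectors in $S$ therefore gives a larger (or equal) value for $A$ than for $B$; taking the max over subspaces of dimension $i$ preserves this inequality, yielding $\lambda_i \geq \gamma_i$. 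This is essentially Weyl's monotonicity theorem, so the work is really just unpacking the min--max formula.

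For part (2), I would use the Kraus representation supplied by the definition of a completely positive map. Write $A - B = C$ with $C \succeq 0$; by linearity, $\Phi(A) - \Phi(B) = \Phi(C)$, so it suffices to show $\Phi(C) \succeq 0$. For any $y \in \C^m$,
\begin{equation*}
y^\dagger \Phi(C) y = \sum_{i=1}^k y^\dagger V_i C V_i^\dagger y = \sum_{i=1}^k (V_i^\dagger y)^\dagger\, C\, (V_i^\dagger y) \geq 0,
\end{equation*}
because each summand is of the form $z^\dagger C z$ with $C \succeq 0$. Hence $\Phi(A) - \Phi(B) \succeq 0$, i.e., $\Phi(A) \succeq \Phi(B)$.

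Neither step presents a real obstacle: part (1) just requires citing or briefly justifying the min--max formula, and part (2) is a direct computation using the Kraus form. If anything, the mildly subtle point is making sure the linearity of $\Phi$ is being used (so the hypothesis only needs $\Phi$ to be positive, not fully completely positive), but since the definition given is already the Kraus form, the cleanest route is the explicit expansion above.
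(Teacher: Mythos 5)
Your proposal is correct and follows essentially the same route as the paper: part (1) is the same variational Rayleigh-quotient argument (the paper proves the needed instance of the min--max principle by hand, via a dimension count on the intersection of the span of the last $n-i+1$ eigenvectors of $A$ with the span of the first $i$ eigenvectors of $B$, while you cite Courant--Fischer directly), and your part (2) simply fills in the Kraus-form positivity computation that the paper compresses into the remark that $\Phi$ is linear.
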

\begin{proof}
For first part consider $\{v_1, v_2, \cdots, v_n \}$ and $\{u_1,u_2,\cdots,u_n\}$ are set of eigenvectors of $A,B$ respectively such that $ Av_i = \lambda_i v_i, Bu_i = \gamma_i u_i $ so these are orthonormal basis for space of complex vectors. Now for proving $\lambda_i \geq \gamma_i$ consider $ P = \{c_iv_i + c_{i+1}v_{i+1} + \cdots + c_nv_n : \forall j \hspace{0.07 in} c_j \in \C \}$ and $ Q = \{ d_1u_1 + d2_u2 + \cdots + d_iu_i : \forall j \hspace{0.07 in} d_j \in \C \} $ and $T = P \hspace{0.02 in} \cap \hspace{0.02 in} Q $ the intersection of spanned spaces by last $n-i+1$ eigenvectors of A and first i eigenvectors of B. Note that $dim(P) = n-i+1 $ and $dim(Q) = i $ so $dim(T) > 0 $ and it means there exist $x \neq 0, \pnorm{x}{} = 1 $ in T, for this x it's easy to see $x^\dagger A x \leq \lambda_i$ and $x^\dagger B x \geq \gamma_i $ but $A\succeq B $ implies $x^\dagger A x \geq x^\dagger B x $ so $\lambda_i \geq \gamma_i$.\\
For the second part note that $\Phi$ is linear.

\end{proof}

Hilbert projective metric is a metric that is defined on rays in a real Banach space and is introduced by Hilbert in \cite{hilbert metric} and first time it defined on \psd{} cone by C. Liverani and M. P. Wojtkowski in \cite{first hilbert psd}. Consider K is a proper cone, for $x,y \in K $ we use notation $x\sim y $ if there exist positive real numbers c and C such that $cy\preceq_K x \preceq_K Cy $ and that means $x-cy , Cy - x \in K$. It's easy to see $\sim$ is an equivalency relation and the equivalency classes are called parts of K. Also the function $M : K \times K \rightarrow \R$ is defined as $M(x/y) = \inf { \left \{ \lambda > 0 : x\preceq _K \lambda y \right \} }$ then the Hilbert projective metric $d:K \times K \rightarrow \R$ defines as follows 
\[
d(x,y) = 
\begin{cases}
\ln M(x/y)M(y/x) & x \sim y \\
0 & x=y=0 \\
\infty & otherwise 
\end{cases}
\]
It's easy to see that for $ K = \hn{} $ if $ A,B \succ 0 $ then
$$ d(A,B) = \ln\left ( \pnorm{{B^{-\frac12}AB^{-\frac12}}}{}\pnorm{{A^{-\frac12}BA^{-\frac12}}}{} \right ) $$
where $\pnorm{A}{} $ equals greatest eigenvalue of A.  The Hilbert projective metric is a metric on rays that means $ d(A,B) = d(\alpha A, \beta B) $ for every positive $ \alpha , \beta $ you can find more properties about Hilbert metric in \cite{hilbert metric}. 

The proof of our main theorem is based on the Banach contraction principle. Thus, for a map $ \phicn $ we consider the Birkhoff contraction ratio $ \kappa(\Phi) \in [0,\infty] $ of $ \Phi $, defined as the
smallest Lipschitz constant of $\Phi$ with respect to $d$ (see \cite{banach cont}):
$$ \kappa(\Phi) = \inf \left \{ C > 0 : d(\Phi (A), \Phi (B)) \leq C d(A,B), \quad \forall A,B \in \hn{} \text{ such that } A \sim B \right\} $$
where if there exist $ A,B \in \hn{}$ such that $A \sim B$ and $\Phi(A) \nsim \Phi(B) $ then $\kappa(\Phi) = \infty $ but we know that this case never happend for \cpm s because for linear maps we have $\kappa(\Phi) \leq 1 $. Moreover one can easily show that if $\Phi$ is a \cpm \vspace{0.1cm} then $A \sim B $ implies $\Phi(A) \sim \Phi(B) $. 

\begin{thm}(Birkhoff-Hopf,\cite{birkhoff})
Let $\phicn$ be a \cpm \vspace{0.1cm} then we have: 
$$ \kappa(\Phi) = \tanh \left( \frac{\Delta (\Phi)}{4} \right ) $$
where $\Delta$ is the diameter of $\Phi$ and defines as follows 
$$ \Delta (\Phi) = \sup \{ d(\Phi(A), \Phi(B)) : A,B \in \hn{} , A \sim B \} $$
and with the convention of $ \tanh ( \infty ) = 1 $.
\end{thm}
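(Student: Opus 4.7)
The plan is to sandwich $\kappa(\Phi)$ between $\tanh(\Delta(\Phi)/4)$ from both sides, with the upper bound being the analytic core. For the upper bound, I would fix $A, B \in \hn{}$ with $A \sim B$ and set $\alpha = M(A/B)$, $\beta = M(B/A)$, so that $d(A,B) = \ln(\alpha\beta)$ and $(1/\beta) B \preceq A \preceq \alpha B$. Introducing the auxiliary matrices $u := \alpha B - A$ and $v := A - (1/\beta) B$, both of which lie in $\hn{}$ by construction, one has the decomposition
$$B = \frac{u+v}{\alpha - 1/\beta}, \qquad A = \frac{(1/\beta)u + \alpha v}{\alpha - 1/\beta}.$$
Since $\Phi$ is linear and sends $\hn{}$ into itself (Lemma \ref{lem 2.1}, part 2), $U := \Phi(u)$ and $V := \Phi(v)$ lie in $\hn{}$ and $\Phi(A), \Phi(B)$ are the corresponding positive combinations of $U, V$.

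Next I would compute $M(\Phi(A)/\Phi(B))$ and $M(\Phi(B)/\Phi(A))$ directly from these expressions. Writing $p := M(V/U)$ and $q := M(U/V)$, a short algebraic manipulation yields
$$e^{d(\Phi(A), \Phi(B))} = \frac{(\alpha\beta \, p + 1)(q+1)}{(p+1)(\alpha\beta + q)}.$$
Because $U, V \in \Phi(\hn{})$, we have $\ln(pq) = d(U,V) \leq \Delta(\Phi)$. The right-hand side is strictly increasing in each of $p$ and $q$, so its supremum under this constraint is attained on the curve $pq = e^{\Delta(\Phi)}$; one-variable optimization there picks out the symmetric critical point $p = e^{(\Delta(\Phi) - d(A,B))/2}$, $q = e^{(\Delta(\Phi) + d(A,B))/2}$. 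After the substitutions $X := d(A,B)/2$ and $Y := \Delta(\Phi)/2$, the bound collapses into the clean hyperbolic form
$$d(\Phi(A), \Phi(B)) \leq 2\ln\frac{\cosh((X+Y)/2)}{\cosh((X-Y)/2)}.$$

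The main obstacle will be the accompanying scalar inequality
$$\ln\frac{\cosh((X+Y)/2)}{\cosh((X-Y)/2)} \leq X\tanh(Y/2), \qquad X, Y \geq 0,$$
which is exactly the desired bound $d(\Phi(A), \Phi(B)) \leq \tanh(\Delta(\Phi)/4)\, d(A,B)$. I expect to handle it by convexity: the difference of the two sides and its first $X$-derivative both vanish at $X=0$, while its second $X$-derivative equals $\tfrac{1}{4}\bigl[\mathrm{sech}^2((X-Y)/2) - \mathrm{sech}^2((X+Y)/2)\bigr]$, which is nonnegative since $|X+Y| \geq |X-Y|$ for $X, Y \geq 0$ and $\mathrm{sech}^2$ is decreasing on $[0,\infty)$. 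For the matching lower bound $\kappa(\Phi) \geq \tanh(\Delta(\Phi)/4)$, I would pick $u_n, v_n \in \hn{}$ with $d(\Phi(u_n), \Phi(v_n)) \to \Delta(\Phi)$ and work infinitesimally along short geodesic segments in $\hn{}$ whose extremal boundary points approach scaled versions of $u_n, v_n$; differentiating $\ln(\cosh((X+Y)/2)/\cosh((X-Y)/2))$ at $X = 0$ gives slope $\tanh(Y/2) = \tanh(\Delta(\Phi)/4)$ in the limit $n \to \infty$, which forces the reverse inequality and closes the sandwich.
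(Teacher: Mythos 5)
The paper offers no proof of this theorem; it is imported verbatim from Eveson and Nussbaum \cite{birkhoff}, so there is nothing internal to compare against. Your proposal reconstructs that reference's elementary argument, and the computational core is right: with $u=\alpha B-A$, $v=A-(1/\beta)B$ one does get $B\propto u+v$ and $A\propto (1/\beta)u+\alpha v$; the cross-ratio identity $e^{d(\Phi(A),\Phi(B))}=(1+\alpha\beta p)(1+q)/\bigl((1+p)(\alpha\beta+q)\bigr)$ is correct (and as an upper bound it is all you need); the right-hand side is indeed increasing in $p$ and $q$ because $\alpha\beta=e^{d(A,B)}\ge 1$; the critical point on $pq=e^{\Delta(\Phi)}$ is where you say it is; and the scalar inequality $\ln\bigl(\cosh((X+Y)/2)/\cosh((X-Y)/2)\bigr)\le X\tanh(Y/2)$ does follow from your convexity argument, since the difference and its first $X$-derivative vanish at $X=0$ while the second derivative is nonnegative.

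Two genuine gaps remain. First, the step ``$d(U,V)\le\Delta(\Phi)$ because $U,V\in\Phi(H^n_+)$'' does not follow from the paper's definition of $\Delta(\Phi)$, which is a supremum only over pairs with $A\sim B$: the matrices $u$ and $v$ are where the infima defining $\alpha$ and $\beta$ are attained, hence lie on the boundary of $H^n_+$, and in general $u\nsim v$. You must bridge this, e.g.\ by replacing $(u,v)$ with the comparable pair $(u+\epsilon v,\,v+\epsilon u)$ and letting $\epsilon\to 0$: if $\Phi(u)\sim\Phi(v)$, continuity of $d$ within a part gives $d(U,V)\le\Delta(\Phi)$, and if $\Phi(u)\nsim\Phi(v)$, the same perturbation shows $\Delta(\Phi)=\infty$, in which case the claim degenerates to $\kappa(\Phi)\le 1$, which the paper already notes holds for any linear cone-preserving map. (The degenerate cases $u=0$, $v=0$, or $\alpha\beta=1$ all force $d(A,B)=0$ and are trivial; the cases $p=\infty$ or $q=\infty$ must likewise be routed through $\Delta(\Phi)=\infty$.) Second, the lower bound $\kappa(\Phi)\ge\tanh(\Delta(\Phi)/4)$ is only gestured at; ``working infinitesimally along short geodesic segments'' has to be turned into an explicit choice of pairs $A=x_n+s\,y_n$, $B=x_n+t\,y_n$ with $d(\Phi(x_n),\Phi(y_n))\to\Delta(\Phi)$ and $t-s\to 0$, together with the companion cross-ratio computation for $d(A,B)$ in the domain, before one can legitimately invoke the derivative $\tanh(Y/2)$ of your bound at $X=0$. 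For the paper's purposes only the upper bound is used, but as a proof of the stated equality the proposal is incomplete without this direction.
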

So the Bikhpff-Hopf theorem tells us the contraction ratio of a completely positive map is always less than or equal to 1 and it equals 1 if and only if $ \Delta(\Phi) = \infty $. 

\section{Nonlinear Perron Frobenius theorem for $\spqnorm{\Phi}$}
In this section we generalize the approach, includes structure, lemmas, theorems, etc, as \cite{main} chapter 4 for proving an important theorem (Theorem \ref{main thm}) in computing $\spsq{}$ norm for \cpm s.\\
For the Schatten $p$-norm, derivative $J_{S_p}$ can be represented as follows (see \cite{17 cpmuting}):
$$ J_{S_p}(A) = \{B : \inner{A}{B} = \spnorm{A}{p}, \spnorm{B}{p^*} = 1 \}$$
where $ \inner{A}{B} = tr(A^\dagger B) $ is the Frobenius inner product on matices. Also we know that because the Schatten norm is Fréchet differentiable so $ J_{S_p} $ is single valued also it's easy to see that for positive semidefinite matrices, $ J_{S_p} $ satisfies following equation (see \cite{schatten})
$$ J_{S_p} (A) = \frac{\hspace{-0.1 in}\eigdec{p-1}} {\hspace{0.1 in}\spnorm{\eigdec{p-1}}{p^*}} $$ 
where $S_{p^*}$ is the dual norm such that $\frac1{p} + \frac1{p^*} = 1$.

\begin{lemma}\label{lem1}
Given matrix A with $\spnorm{A}{p} = 1 $, ${f_\Phi (A) = \fphin \neq 0}$. A is a critical point of function $ f_\Phi $ if and only if it is a fixed point of $ S_\Phi (A) = \sphi{A} $.
\end{lemma}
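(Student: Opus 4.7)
The plan is to treat this as a Lagrange-multiplier / chain-rule calculation, using that the Schatten norms are Fréchet differentiable and that the duality maps $J_{S_p}$ are the gradients of those norms. The ratio $f_\Phi$ is positively homogeneous of degree zero, so without loss of generality we work on the level set $\spnorm{A}{p}=1$ and look for critical points of $A \mapsto \spnorm{\Phi(A)}{q}$ subject to that one constraint. First I would recall (or cite) that for any Fréchet-differentiable norm, the derivative of $A \mapsto \spnorm{A}{p}$ is exactly $\js{A}{p}$, and likewise the derivative of $A \mapsto \spnorm{\Phi(A)}{q}$ is, by linearity of $\Phi$ and the chain rule, $\Phi^*(\js{\Phi(A)}{q})$. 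This uses nothing beyond the characterization $\inner{A}{\js{A}{p}}=\spnorm{A}{p}$, $\spnorm{\js{A}{p}}{p^*}=1$ that is recalled just before the lemma.

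Next I would apply the Lagrange multiplier theorem on the constraint $\spnorm{A}{p}=1$ to translate ``$A$ is a critical point of $f_\Phi$'' into the stationarity equation
\begin{equation*}
\Phi^*(\js{\Phi(A)}{q}) = \lambda\, \js{A}{p}
\end{equation*}
for some real $\lambda$. Pairing both sides with $A$ under the Frobenius inner product, the left side equals $\inner{\Phi(A)}{\js{\Phi(A)}{q}}=\spnorm{\Phi(A)}{q}$ while the right side equals $\lambda\, \inner{A}{\js{A}{p}}=\lambda$ because $\spnorm{A}{p}=1$. Hence $\lambda = \spnorm{\Phi(A)}{q}=f_\Phi(A)$, and by the nondegeneracy hypothesis $f_\Phi(A)\neq 0$ we get $\lambda > 0$.

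The last step is to apply $J_{S_{p^*}}$ to both sides of the stationarity equation. Here I use two elementary facts about duality maps of smooth norms, each an immediate consequence of the explicit formula for $J_{S_p}$ on Hermitian matrices given in the excerpt: (i) $J_{S_{p^*}}$ is invariant under scaling by a positive real, i.e.\ $J_{S_{p^*}}(\lambda B) = J_{S_{p^*}}(B)$ for $\lambda>0$; and (ii) $J_{S_{p^*}}(\js{A}{p})=A$ whenever $\spnorm{A}{p}=1$, since $J_{S_{p^*}}$ is the inverse of $\js{\cdot}{p}$ on unit spheres. Combining these yields $\sphi{A} = J_{S_{p^*}}(\lambda\, \js{A}{p}) = J_{S_{p^*}}(\js{A}{p}) = A$, i.e.\ $A$ is a fixed point of $S_\Phi$. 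The converse direction is entirely symmetric: if $A = S_\Phi(A)$ with $\spnorm{A}{p}=1$, then applying $\js{\cdot}{p}$ to both sides and using the dual of (ii) shows that $\Phi^*(\js{\Phi(A)}{q})$ is a positive multiple of $\js{A}{p}$, which is exactly the Lagrange stationarity condition.

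I expect the main technical point to be the clean verification of facts (i) and (ii) together with the differentiability of $A\mapsto \js{A}{p}$ on positive definite matrices; these are standard for the Schatten setting but need the eigendecomposition formula for $\js{A}{p}$ already recalled in the excerpt. Everything else—scale invariance, the Lagrange step, the chain rule through the linear map $\Phi$—is routine.
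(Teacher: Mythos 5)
Your proposal is correct and follows essentially the same route as the paper: both derive the stationarity equation $\Phi^*(\js{\Phi(A)}{q}) = \lambda\, \js{A}{p}$, identify $\lambda = f_\Phi(A)$ by pairing with $A$ under the Frobenius inner product, and pass between the two formulations using the scale invariance of the duality map and the inversion identity $\js{\js{A}{p^*}}{p} = A/\spnorm{A}{p}$. The only cosmetic difference is that you phrase the critical-point condition via Lagrange multipliers and compute $\lambda$ in the forward direction, whereas the paper computes it in the converse direction; the substance is identical.
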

\begin{proof}
At first assume A is a critical point of $f_\Phi$ then with differentiation we find 
$$ \Phi ^* J_{S_q}(\Phi (A)) = f_\Phi (A) J_{S_p}(A)  $$
then by applying the $ J_{S_{p^*}} $ function to above equation we have the following 
$$ \sphi{A} = \frac{\hspace{-0.05 in} A}{\hspace{0.1 in}\spnorm{A}{p}} $$
because $\js{\js{A}{p*}}{p} = A / \spnorm{A}{p}$ and $\js{\alpha A}{p} = \js{A}{p}$ so by the assumption $\spnorm{A}{p} = 1 $ we conclude A is a fixed point of $S_\Phi$. Now assume A is fixed point of $S_\Phi$ then we have 
$ \sphi{A} = A $ so there exist $\lambda > 0 $ such that $\lambda \Phi ^* \js{\Phi (A)}{q} = \js{A}{p}$ so by definition of $\js{A}{p}$ we have 
\begin{equation}\label{inner}
     \lambda ^ {-1} = \inner{A} {\Phi ^ * \js{\Phi (A)}{q}} = \inner{\Phi (A)}{\js{\Phi (A)}{q}} = \spnorm{\Phi (A)}{q} = f_\Phi (A)
\end{equation}
where the second equality holds because for linear function $\Phi$  we have $\inner{\Phi (A)}{B} = \inner{A}{\Phi ^* (B)}$ and the last equality holds because $\spnorm{A}{p} = 1$ so from \eqref{inner} we conclude A is a critical point of $ f_\Phi  $.
\end{proof}
\begin{defi}(see \cite{fully}, Proposition C.6) 
Let $\phicn$ be a \cpm \vspace{0.1cm} then we call it \fuin{} if for all singular, but nonzero $ A\succeq 0 $, $ rank(\Phi (A)) > rank(A) $ and $ \Phi $ is \fuin{} if and only if $ \Phi ^* $ is \fuin{}.
\end{defi}
 \begin{lemma}\label{lem2}
    Let $\phicn$ be a \cpm \vspace{0.1cm} and P be a part of $\psd{}$ cone such that for every $A \in P $ we have $ \Phi ^* (\Phi (A)) \in P $. If $ \kappa (\Phi) \leq \tau < 1 $ then $ \sphi{A} $ has a unique fixed point $ X $ in P and the following power method converges to $ X $ for any starting point $ A \in P $ :
    $$ A_0 = A , \quad A_{k+1} = S_\Phi (A_k), \quad k = 1,2,\cdots $$
 \end{lemma}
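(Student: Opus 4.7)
My plan is to apply the Banach contraction principle to $S_\Phi$ restricted to the part $P$, equipped with the Hilbert projective metric $d$. By Lemma~\ref{lem1}, a fixed point of $S_\Phi$ on the unit-norm cross-section $\{A\in P : \spnorm{A}{p}=1\}$ is exactly a critical point of $f_\Phi$, so the conclusion follows once we verify three standard ingredients: (a) this cross-section is complete under $d$; (b) $S_\Phi$ sends $P$ into $P$; (c) $S_\Phi$ is a strict $d$-contraction with Lipschitz constant at most $\tau<1$.

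Ingredient (a) is the classical completeness of parts of a proper cone under the Hilbert metric after quotienting by positive scalar multiples, and $S_\Phi$ descends well because $\Phi$, $\Phi^*$ are homogeneous while $J_{S_q}$, $J_{S_{p^*}}$ are scale-invariant on rays (due to their built-in normalization). For (b), two PSD matrices lie in the same part of $H^n_+$ iff they have the same range, and $J_{S_s}(A)\propto A^{s-1}$ shares the range of $A$; thus starting from $A\in P$, applying $\Phi$, $J_{S_q}$, $\Phi^*$, $J_{S_{p^*}}$ in turn yields an element in the same part as $\Phi^*(\Phi(A))$, which is in $P$ by hypothesis.

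The heart of the proof is (c). I would exploit multiplicativity of Birkhoff contraction ratios along the composition $S_\Phi = J_{S_{p^*}}\circ \Phi^*\circ J_{S_q}\circ \Phi$ to obtain
\[
\kappa(S_\Phi)\le \kappa(J_{S_{p^*}})\cdot \kappa(\Phi^*)\cdot \kappa(J_{S_q})\cdot \kappa(\Phi).
\]
Identifying $J_{S_s}$ on rays with the $(s-1)$-power map on the PSD cone gives $\kappa(J_{S_s})=s-1$, so the two Jacobian factors contribute $(q-1)(p^*-1) = (q-1)/(p-1)\le 1$ under the standing convention $1<q\le p<\infty$. Since $\Phi^*$ is also completely positive, $\kappa(\Phi^*)\le 1$ (as noted after Birkhoff--Hopf), while $\kappa(\Phi)\le \tau$ by hypothesis. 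Thus $\kappa(S_\Phi)\le \tau<1$, and Banach's fixed point theorem delivers both a unique fixed point $X\in P$ and convergence of the iteration from any starting point $A_0\in P$.

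The main obstacle I anticipate is establishing $\kappa(J_{S_s})=s-1$ when $s>2$. For $s-1\le 1$ this is immediate from the Loewner--Heinz inequality: $A\preceq \alpha B$ implies $A^{s-1}\preceq \alpha^{s-1} B^{s-1}$, hence $M(A^{s-1}/B^{s-1})\le M(A/B)^{s-1}$ and similarly in the other direction, giving the Hilbert-metric bound. For $s-1>1$, Loewner--Heinz fails and a more delicate argument is needed, either via Kubo--Ando operator means or by exploiting the identity $J_{S_{s^*}}\circ J_{S_s}=\mathrm{id}$ on rays, which forces the Birkhoff ratios to multiply to $1$ and therefore pins down $\kappa(J_{S_s})\ge s-1$ (with the matching upper bound requiring a separate computation).
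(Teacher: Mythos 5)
Your proposal follows the same route as the paper's proof: show that $S_\Phi$ maps the part $P$ into itself (via $A\sim J_{S_p}(A)$ together with the hypothesis $\Phi^*(\Phi(A))\in P$), note that the unit-$S_p$-norm slice of $P$ is complete in the Hilbert metric, and apply the Banach contraction principle. The one substantive difference is your step (c): the paper's proof of this lemma simply says ``by assumption $\kappa(\Phi)\le\tau<1$ we can use the Banach fixed point theorem,'' without explaining why $S_\Phi$ itself---rather than $\Phi$---is a strict $d$-contraction; the submultiplicative bound $\kappa(S_\Phi)\le\kappa(J_{S_{p^*}})\,\kappa(\Phi^*)\,\kappa(J_{S_q})\,\kappa(\Phi)$ and the value $\kappa(J_{S_s})=s-1$ appear in the paper only in the corollary following Theorem~\ref{main thm}. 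You supply exactly this missing bookkeeping, which is an improvement, but it makes the conclusion depend on $1<q\le p$ (so that $(q-1)/(p-1)\le 1$), a hypothesis not recorded in the lemma statement, and on the identity $\kappa(J_{S_s})=s-1$, which the paper asserts without proof and which, as you correctly observe, does not follow from Loewner--Heinz when $s>2$. Either a citation for the Lipschitz constant of the power map $A\mapsto A^{t}$ with $t>1$ on the positive definite cone in Hilbert's projective metric, or a direct argument, is needed to close that point in your write-up and in the paper alike.
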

 \begin{proof}
 Note that if $ A \in P $ then $ \js{A}{p} \in P $ because P is a part of $\hn{}$ and $A \sim \js{A}{p}$ so the assumption $ \Phi ^* (\Phi (A)) \in P $ implies $ S_\Phi (A) \in P $ so by assumption $ \kappa (\Phi) \leq \tau < 1 $ we can use the Banach fixed point theorem for complete metric space $ (M,d) $ where $ M = P \hspace{0.02 in} \cap \hspace{0.02 in} \{A\succeq 0 : \spnorm{A}{p} = 1\} $ (for proof see \cite{complete metric}) and conclude $S_\Phi $ has a unique fixed point X in P and the power method converges to X for any starting point.
 \end{proof}
 
 Note that it's not enough to say the power method always converges to the maximizer of $f_\Phi$ because now it's possible for $f_\Phi$ to have more than one critical points because we only proved in every part of $\hn{}$ it has at most one critical point and for our claim we need some more assumptions that you can see in following lemmas. 
 
 \begin{lemma}\label{lem3}
 For any \cpm \vspace{0.1cm} $\Phi$ the global maximum of $ f_\Phi $ is attained in $\hn{}$.
 \end{lemma}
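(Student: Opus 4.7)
The plan is to show that for any Hermitian $A$, $f_\Phi(A) \le f_\Phi(|A|)$, so nothing is lost by restricting the supremum from $H^n$ to $H^n_+$. Attainment of the supremum itself is routine: $f_\Phi$ is $0$-homogeneous, so one restricts to $\{A \in H^n : \spnorm{A}{p} = 1\}$, which is compact in the finite-dimensional space $H^n$, and the continuous map $A \mapsto \spnorm{\Phi(A)}{q}$ attains its maximum there. The content of the lemma is that this maximum can be taken in $H^n_+$.

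Write the spectral decomposition $A = A_+ - A_-$ with $A_+, A_- \succeq 0$ having orthogonal supports, so that $|A| = A_+ + A_-$. Since $A$ and $|A|$ share singular values, $\spnorm{A}{p} = \spnorm{|A|}{p}$. Because $\Phi$ is completely positive, $\Phi(A_\pm) \in H^m_+$. The reduction will be complete once I establish the PSD inequality
$$ \spnorm{\Phi(A_+) - \Phi(A_-)}{q} \;\le\; \spnorm{\Phi(A_+) + \Phi(A_-)}{q}, $$
which gives $\spnorm{\Phi(A)}{q} \le \spnorm{\Phi(|A|)}{q}$ and hence $f_\Phi(A) \le f_\Phi(|A|)$.

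The main obstacle is precisely this last inequality: for arbitrary PSD matrices $X, Y$ and $q \ge 1$, $\spnorm{X - Y}{q} \le \spnorm{X + Y}{q}$. The naive attempt via Lemma~\ref{lem 2.1}(1) --- noting $X + Y \succeq \pm (X - Y)$ because $2X, 2Y \succeq 0$ --- only yields the two one-sided eigenvalue bounds $\mu_j(X+Y) \ge \mu_j(X-Y)$ and $\mu_j(X+Y) \ge -\mu_{n-j+1}(X-Y)$, and these do \emph{not} imply the pointwise singular-value comparison $\sigma_j(X-Y) \le \mu_j(X+Y)$ (a small $2 \times 2$ example shows this can fail at $j = n$). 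The correct formulation is the weak majorization $\sigma(X-Y) \prec_w \sigma(X+Y)$, from which the Schatten-norm inequality follows by Schur-convexity of $t \mapsto t^q$ on $[0, \infty)$. Establishing this weak-majorization claim --- e.g.\ via Ky Fan's variational formula for partial sums of singular values, or via a block-matrix unitary-equivalence trick relating $X - Y$ to a compression of the block-diagonal PSD matrix with diagonal blocks $2X$ and $2Y$ --- is where the real technical content lies.
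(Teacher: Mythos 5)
Your proposal follows the same route as the paper's own proof --- reduce to $|A|$ via $\spnorm{A}{p} = \spnorm{|A|}{p}$ and $\spnorm{\Phi(A)}{q} \le \spnorm{\Phi(|A|)}{q}$ --- but you have correctly identified a subtlety that the paper glosses over. The paper derives only the one-sided bound $\Phi(A) \preceq \Phi(|A|)$ and then invokes ``monotonicity of Schatten norms''; that monotonicity is valid only between positive semidefinite matrices, and $\Phi(A)$ need not be positive semidefinite, so as written the paper's step is incomplete (and your $2\times 2$ observation shows the naive pointwise eigenvalue comparison genuinely fails). The correct repair is exactly what you describe: from $\pm A \preceq |A|$ and positivity of $\Phi$ one gets $\pm\Phi(A) \preceq \Phi(|A|)$, i.e.\ one needs $\spnorm{X-Y}{q} \le \spnorm{X+Y}{q}$ for $X = \Phi(A_+)$, $Y = \Phi(A_-) \succeq 0$, which rests on the weak majorization $\sigma(X-Y) \prec_w \lambda(X+Y)$. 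You leave that majorization as a pointer rather than a proof, but it closes quickly along the Ky Fan line you suggest: taking orthogonal projections $E$, $F$ with mutually orthogonal ranges and $\mathrm{rank}(E)+\mathrm{rank}(F) \le k$ onto the appropriate eigenspaces of the positive and negative parts of $X-Y$, one has $\sum_{j\le k}\sigma_j(X-Y) = \mathrm{tr}\bigl((X-Y)(E-F)\bigr) \le \mathrm{tr}\bigl((X+Y)(E+F)\bigr) \le \sum_{j\le k}\lambda_j(X+Y)$, using $\mathrm{tr}(XF)\ge 0$ and $\mathrm{tr}(YE)\ge 0$; monotonicity and convexity of $t \mapsto t^q$ then yield the Schatten-norm inequality. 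With that one standard lemma supplied, your argument (including the compactness step for attainment, which the paper omits entirely) is complete and in fact more rigorous than the proof in the paper.
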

 \begin{proof}
 We know that $ \spnorm{|A|}{p} = \spnorm{A}{p} $ and also $ \spnorm{\Phi (A)}{p} \leq \spnorm{\Phi (|A|)}{p} $ for any $ A \in H^{n} $ because for every Hermitian matrix A, eigenvalues of $|A|$ are absolute values of eigenvalues of A so $A \preceq |A|$ and by Lemma \ref{lem 2.1} we have $\Phi(A) \preceq \Phi(|A|)$ and then because of monotonicity of Schatten norms we have $ \spnorm{\Phi (A)}{p} \leq \spnorm{\Phi (|A|)}{p} $. So we have the following 
 $$ f_\Phi(A) = \fphi = \frac{\spnorm{|\Phi(A)|}{q}}{\spnorm{|A|}{p}} \leq \frac{\spnorm{\Phi(|A|}{q}}{\spnorm{|A|}{p}} = f_\Phi(|A|) $$
 so if $A$ is a maximizer of $f_\Phi$ then $ f_\Phi (A) \leq f_\Phi (|A|) $ which concludes the proof. 
 \end{proof}

\begin{lemma}\label{lem4}
For every $ A \in \hn{} , p \in \R^N_+ $ we have $ S_\Phi(A) \sim \Phi ^* \Phi (A).$
\end{lemma}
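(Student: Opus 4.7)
The plan is to unfold $S_\Phi(A) = J_{S_{p^*}}(\Phi^*(J_{S_q}(\Phi(A))))$ using the explicit eigendecomposition formula for $J_{S_p}$ on positive semidefinite matrices, and then chain together two simple facts to get from $S_\Phi(A)$ to $\Phi^*\Phi(A)$ via a sequence of $\sim$-equivalences, finishing with transitivity.

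The first fact is that for any $B \in \hn{}$ and any exponent $\alpha > 0$, one has $B^\alpha \sim B$ in $\hn{}$. Writing $B = Q\Lambda Q^\dagger$ with $\Lambda = \mathrm{diag}(\lambda_1,\ldots,\lambda_r,0,\ldots,0)$ and $\lambda_1 \geq \cdots \geq \lambda_r > 0$, we see $B^\alpha = Q\Lambda^\alpha Q^\dagger$ has the same null space as $B$, and setting $c = \min(\lambda_1^{\alpha-1},\lambda_r^{\alpha-1})$, $C = \max(\lambda_1^{\alpha-1},\lambda_r^{\alpha-1})$ gives $cB \preceq B^\alpha \preceq CB$ by an eigenvalue-wise comparison on the range and equality (both zero) on the null space. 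Since $J_{S_p}(B)$ is a positive scalar multiple of $B^{p-1}$, this immediately yields $J_{S_p}(B) \sim B$ for every $B \in \hn{}$.

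The second fact is that any \cpm{} $\Psi$ preserves the $\sim$ relation: if $X \sim Y$ is witnessed by $cY \preceq X \preceq CY$, then Lemma \ref{lem 2.1}(2) together with linearity gives $c\Psi(Y) \preceq \Psi(X) \preceq C\Psi(Y)$, hence $\Psi(X) \sim \Psi(Y)$. With these two tools in hand, the chain is short. Applying the first fact to $\Phi(A) \succeq 0$ gives $J_{S_q}(\Phi(A)) \sim \Phi(A)$. Pushing through $\Phi^*$ using the second fact yields $\Phi^*(J_{S_q}(\Phi(A))) \sim \Phi^*\Phi(A)$. Applying the first fact once more to this PSD matrix gives $J_{S_{p^*}}(\Phi^*(J_{S_q}(\Phi(A)))) \sim \Phi^*(J_{S_q}(\Phi(A)))$. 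Transitivity of $\sim$ then delivers $S_\Phi(A) \sim \Phi^*\Phi(A)$, as required.

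The only delicate point is ensuring the constants $c, C$ in $B^\alpha \sim B$ are strictly positive, which relies on $\alpha > 0$ together with the observation that $B^\alpha$ and $B$ share the same null space, so no nonzero eigenvalue is mapped to $0$ or $\infty$. The degenerate case $\Phi(A) = 0$ makes both sides of the claimed equivalence the zero matrix, so the conclusion holds trivially there. Everything else is a mechanical unfolding of definitions.
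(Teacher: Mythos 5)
Your proposal is correct and follows essentially the same route as the paper: the chain $\Phi(A) \sim J_{S_q}(\Phi(A))$, push through $\Phi^*$, then $J_{S_{p^*}}(\cdot) \sim \cdot$, combined by transitivity. The only difference is that you explicitly verify the two facts the paper dismisses as ``easy to see'' (that $J_{S_p}(B) \sim B$ via the eigenvalue bounds $c = \min(\lambda_1^{\alpha-1},\lambda_r^{\alpha-1})$, $C = \max(\lambda_1^{\alpha-1},\lambda_r^{\alpha-1})$, and that completely positive maps preserve $\sim$), which is a welcome addition but not a different argument.
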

\begin{proof}
It's easy to see $ A \sim \js{A}{p} $ and if $ A \sim B $ then $ \Phi(A) \sim \Phi(B) $ so we have 
$$ \Phi (A) \sim \js{\Phi (A)}{q} \Rightarrow \Phi ^* \Phi (A) \sim \Phi ^* \js{\Phi (A)}{q} \Rightarrow \Phi ^* \Phi (A) \sim S_\Phi(A)$$
\end{proof}

\begin{lemma}\label{lem5}
Let $ \phicn $ be a \cpm \vspace{0.1cm} and suppose that $ \Phi ^* \Phi $ is \fuin{} then $ S_\Phi (A) \in H^n_{++}$ for every $A \in H^n_{++}$ and every positive semidefinite critical point A of $f_\Phi$ is positive definite. 
\end{lemma}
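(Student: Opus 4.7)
The plan is to reduce both assertions of the lemma to a rank comparison through Lemma \ref{lem4}. The key ancillary fact I will use is that two \psd{} matrices $A$ and $B$ lie in the same part of $\hn{}$ exactly when they have the same range: from $cB \preceq A \preceq CB$ and $v \in \ker(B)$ one obtains $v^\dagger A v = 0$ and hence $v \in \ker(A)$, and the converse follows by simultaneous diagonalization on the common range. Applied to the relation $S_\Phi(A) \sim \Phi^*\Phi(A)$ from Lemma \ref{lem4}, this gives $rank(S_\Phi(A)) = rank(\Phi^*\Phi(A))$ for every $A \succeq 0$, so I only need to understand the rank of $\Phi^*\Phi(A)$.

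For the first assertion, fix $A \in H^n_{++}$ and write $\Phi^*\Phi(X) = \sum_{i,j}(V_j^\dagger V_i)\, X\, (V_j^\dagger V_i)^\dagger$ using the Kraus representations of $\Phi$ and $\Phi^*$. Suppose for contradiction that $v \neq 0$ satisfies $v^\dagger \Phi^*\Phi(A)\, v = 0$. Expanding gives $\sum_{i,j} w_{i,j}^\dagger A\, w_{i,j} = 0$ with $w_{i,j} = (V_j^\dagger V_i)^\dagger v$; strict positivity of $A$ forces $w_{i,j} = 0$ for every $i,j$. But this is a statement about the Kraus operators themselves, so the same $v$ annihilates $\Phi^*\Phi(B)$ for every $B \succeq 0$, and by induction $v \in \ker((\Phi^*\Phi)^k(B))$ for every $k \geq 1$. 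Starting from $B_0 = e_1 e_1^\dagger$ and iterating $B_{k+1} = \Phi^*\Phi(B_k)$, full indecomposability forces $rank(B_{k+1}) > rank(B_k)$ whenever $B_k$ is singular; within at most $n-1$ steps $B_{n-1}$ becomes positive definite, contradicting $v \in \ker(B_{n-1})$. Hence $\Phi^*\Phi(A) \succ 0$, and by the rank identity above, $S_\Phi(A) \succ 0$.

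For the second assertion, let $A \succeq 0$ be a critical point of $f_\Phi$. Since $f_\Phi$ is degree-zero homogeneous I may rescale so that $\spnorm{A}{p}=1$ without affecting the critical-point property or the rank. Lemma \ref{lem1} then gives $A = S_\Phi(A)$, and Lemma \ref{lem4} yields $A \sim \Phi^*\Phi(A)$; in particular $rank(A) = rank(\Phi^*\Phi(A))$. If $A$ were singular, full indecomposability of $\Phi^*\Phi$ would give $rank(\Phi^*\Phi(A)) > rank(A)$, a contradiction; since $\spnorm{A}{p} = 1$ rules out $A = 0$, I conclude $A \succ 0$.

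The main obstacle is the rank-iteration step in the first assertion. The second assertion is essentially one line once the range-equivalence consequence of Lemma \ref{lem4} is in hand, but the first requires promoting a pointwise failure of $\Phi^*\Phi(A) \succ 0$ at a single positive definite $A$ into a global kernel condition on the Kraus operators, and then invoking full indecomposability (which constrains the map only on singular inputs) to produce a positive definite image still carrying that forbidden kernel vector. The use of $A \in H^n_{++}$ rather than merely $A \succeq 0$ is essential precisely at the step where $\sum_{i,j} w_{i,j}^\dagger A\, w_{i,j} = 0$ must force each $w_{i,j} = 0$; this is the only place where strict positivity of $A$ is used.
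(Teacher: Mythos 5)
Your proposal is correct and follows the same overall skeleton as the paper's proof (reduce everything to ranks via Lemma \ref{lem4}, then invoke full indecomposability), but for the first assertion you supply a genuinely different argument at the crucial step. The paper simply asserts that $rank(\Phi^*\Phi(A)) < n$ for $A \in H^n_{++}$ is ``impossible because $\Phi^*\Phi$ is fully indecomposable'' --- yet the definition of full indecomposability only constrains the map on \emph{singular} nonzero inputs, so this step is not immediate and the paper leaves it unjustified. You close that gap with the Kraus-operator argument: a vector $v$ with $v^\dagger \Phi^*\Phi(A) v = 0$ for strictly positive $A$ must be annihilated by every operator $(V_j^\dagger V_i)^\dagger$, hence lies in $\ker(\Phi^*\Phi(B))$ for every $B \succeq 0$, which contradicts the fact that iterating $\Phi^*\Phi$ from a rank-one seed reaches a positive definite matrix in at most $n-1$ steps. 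This is valid. A shorter route to the same fact is available from the paper's own Lemma \ref{lem 2.1}: pick any singular $B \succeq 0$ with $rank(B) = n-1$, so $\Phi^*\Phi(B) \succ 0$ by full indecomposability; since $A \succ 0$ there is $c > 0$ with $A \succeq cB$, whence $\Phi^*\Phi(A) \succeq c\,\Phi^*\Phi(B) \succ 0$ by monotonicity. Your treatment of the second assertion coincides with the paper's, and your preliminary observation that equivalent \psd{} matrices share the same range (hence rank) is the correct way to make the paper's implicit rank comparisons rigorous. One minor point worth a sentence in a final write-up: Lemma \ref{lem1} is stated under the hypothesis $f_\Phi(A) \neq 0$, which you should note holds automatically here since $\Phi^*\Phi(A) \neq 0$ for nonzero $A \succeq 0$ under full indecomposability.
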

\begin{proof}
For proving $ S_\Phi (H^n_{++}) \subseteq H^n_{++} $ consider there exist a matrix $ A \in (H^n_{++}) $ such that $ S_\Phi (A) \nsubseteq (H^n_{++}) $ then $ rank(S_\Phi (A)) < rank(A) $ but from Lemma \ref{lem4} it's known $ S_\Phi(A) \sim \Phi ^* \Phi (A)$ so $ rank(\Phi ^* \Phi (A)) < n $ and it's impossible because of the assumption $ \Phi ^* \Phi $ is \fuin{}. Now consider $ S_\Phi (A) = A $ and $ A \in \hn{} $ so $ \Phi ^* \Phi (A) \sim A $ and this implies $rank(A) = n $ so A is positive definite. 
\end{proof}

\begin{thm}\label{main thm}
Let $\phicn$ be a \cpm \vspace{0.1cm} and $\Phi ^* \Phi $ \fuin{}. If $ \kappa (S_\Phi) \leq \tau < 1 $ then $ f_\Phi $ has a unique critical point $X \in H^n_{+} $ and $f_\Phi (X) = \spqnorm{\Phi}$ and X is positive definite. Moreover the following power method converges to X for any starting point $A \in \hn{}$
$$ A_0 = A , \quad A_{k+1} = S_\Phi (A_k), \quad k = 1,2,\cdots $$
\end{thm}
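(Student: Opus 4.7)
The plan is to combine the existence guarantee of Lemma \ref{lem3}, the critical-point/fixed-point equivalence of Lemma \ref{lem1}, and the positive-definiteness conclusion of Lemma \ref{lem5} to locate a fixed point of $S_\Phi$ inside $H^n_{++}$; then to invoke Lemma \ref{lem2} with the part $P = H^n_{++}$ to obtain uniqueness and convergence from $H^n_{++}$; and finally to exploit full indecomposability of $\Phi^*\Phi$ to push arbitrary starting points in $\hn{}$ into $H^n_{++}$ after finitely many iterations.

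For existence, continuity of $f_\Phi$ on the compact sphere $\{A \in H^n : \spnorm{A}{p} = 1\}$ yields a global maximizer; Lemma \ref{lem3} allows us to take it to be some $X^\star \in \hn{}$. Lemma \ref{lem1} shows $X^\star$ is a fixed point of $S_\Phi$, and Lemma \ref{lem5}---the step where full indecomposability of $\Phi^*\Phi$ is invoked---upgrades $X^\star$ to being strictly positive definite.

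To obtain uniqueness I would verify the hypotheses of Lemma \ref{lem2} for $P = H^n_{++}$. The open cone $H^n_{++}$ is a single part of $\hn{}$: for $A, B \succ 0$ the eigenvalue bounds $\frac{\lambda_{\min}(B)}{\lambda_{\max}(A)}\, A \preceq B \preceq \frac{\lambda_{\max}(B)}{\lambda_{\min}(A)}\, A$ give $A \sim B$. From Lemma \ref{lem5} we have $S_\Phi(H^n_{++}) \subseteq H^n_{++}$, and Lemma \ref{lem4} together with the fact that the two sides lie in the same part then forces $\Phi^*\Phi(H^n_{++}) \subseteq H^n_{++}$. Combined with the hypothesis $\kappa(S_\Phi) \le \tau < 1$, Lemma \ref{lem2} produces a unique fixed point $X$ of $S_\Phi$ in $H^n_{++}$ with power-method convergence from every point of $H^n_{++}$. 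By Lemmas \ref{lem1} and \ref{lem5}, every critical point of $f_\Phi$ in $\hn{}$ is a fixed point of $S_\Phi$ and lies in $H^n_{++}$, so the critical point in $\hn{}$ is unique, equals $X = X^\star$, is positive definite, and satisfies $f_\Phi(X) = \spqnorm{\Phi}$.

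The main obstacle is extending the convergence conclusion to arbitrary $A \in \hn{}$, because a rank-deficient $A$ lies in a different part of the cone than $H^n_{++}$ and Lemma \ref{lem2} does not apply along the boundary. The plan is to show that the iterates reach $H^n_{++}$ in at most $n$ steps and then invoke the previous paragraph. By Lemma \ref{lem4}, $A_{k+1} = S_\Phi(A_k) \sim \Phi^*\Phi(A_k)$, so these two share the same range and hence the same rank. Whenever $A_k$ is nonzero and singular, full indecomposability of $\Phi^*\Phi$ gives $\mathrm{rank}(A_{k+1}) > \mathrm{rank}(A_k)$, so after at most $n$ iterations some $A_{k_0}$ lands in $H^n_{++}$; applying Lemma \ref{lem2} to the tail $(A_k)_{k \ge k_0}$ then completes the convergence proof.
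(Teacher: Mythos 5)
Your proposal is correct and follows essentially the same route as the paper's proof: existence of a maximizer via Lemma \ref{lem3}, positive definiteness via Lemma \ref{lem5}, and uniqueness plus convergence via Lemmas \ref{lem1} and \ref{lem2} applied to the part $H^n_{++}$. Your write-up is in fact more complete than the paper's one-sentence argument, since you explicitly verify the hypotheses of Lemma \ref{lem2} for $P = H^n_{++}$ (invariance under $\Phi^*\Phi$ via Lemma \ref{lem4}) and supply the rank-increase argument needed to handle singular starting points in $H^n_+$, both of which the paper leaves implicit.
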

\begin{proof}
From Lemma \ref{lem3} we know that $f_\Phi$ has a miximizer $X \in \hn{}$ and from Lemma \ref{lem5} we know X is positive definite also from Lemma \ref{lem2} we know that the power method converges to fixed point of $S_\Phi$ and from Lemma \ref{lem1} it's the unique maximizer of $f_\Phi$ in $\hn{}$ so we are done. 
\end{proof}
\begin{corollary}
By definition of $\kappa $ it's clear $ \kappa(S_\Phi) \leq \kappa(J_{S_{p^*}}) \kappa(\Phi ^*) \kappa(J_{S_q}) \kappa(\Phi) $ on the other hand we have $ \kappa(J_{S_p}) = p-1 $ so $ \kappa(S_\Phi) \leq \kappa(\Phi ^*) \kappa(\Phi) \frac{q-1}{p-1}$ so from Theorem \ref{main thm} the convergence to $\spsq{}$ is proven for case $p > q$.
\end{corollary}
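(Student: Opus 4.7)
The plan is to verify the two inequalities displayed in the statement and then invoke Theorem~\ref{main thm}. First I would decompose $S_\Phi$ as the four-fold composition $S_\Phi = J_{S_{p^*}} \circ \Phi^* \circ J_{S_q} \circ \Phi$. Each factor preserves the relation $\sim$ on $H^n_+$: for $\Phi$ and $\Phi^*$ this was noted after the definition of $\kappa$, and for $J_{S_r}$ it follows from $A \sim J_{S_r}(A)$, which is visible from the eigendecomposition formula given for $J_{S_p}$ on PSD matrices. Consequently the Birkhoff contraction ratio is submultiplicative along this chain: iterating the Lipschitz bound four times yields
\begin{equation*}
d(S_\Phi(A), S_\Phi(B)) \leq \kappa(J_{S_{p^*}})\,\kappa(\Phi^*)\,\kappa(J_{S_q})\,\kappa(\Phi)\, d(A,B)
\end{equation*}
for every $A \sim B$, and taking the infimum over admissible Lipschitz constants gives the first inequality in the statement.

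Next I would bound the individual factors. The Birkhoff--Hopf theorem cited above gives $\kappa(\Phi) \leq 1$ and $\kappa(\Phi^*) \leq 1$, since $\Phi^*$ is itself a completely positive map. For the duality maps I would invoke the cited identity $\kappa(J_{S_r}) = r - 1$: because $J_{S_r}(A)$ is proportional to $A^{r-1}$ on the PSD cone and Hilbert's projective metric is scale-invariant, one reduces to the claim that the $(r-1)$-power map on $H^n_{++}$ has Birkhoff contraction equal to $r-1$, which can be read off the closed form $d(A,B) = \ln\bigl(\lambda_{\max}(B^{-1/2} A B^{-1/2})\,\lambda_{\max}(A^{-1/2} B A^{-1/2})\bigr)$ since raising to a positive power rescales the logarithms of extremal eigenvalues by the same factor. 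The conjugate exponent identity $1/p + 1/p^* = 1$ then gives $\kappa(J_{S_{p^*}}) = p^* - 1 = 1/(p-1)$, which combined with $\kappa(J_{S_q}) = q-1$ produces
\begin{equation*}
\kappa(S_\Phi) \leq \frac{q-1}{p-1}\,\kappa(\Phi^*)\,\kappa(\Phi) \leq \frac{q-1}{p-1}.
\end{equation*}

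Under the hypothesis $p > q$ (with $q > 1$, so that $f_\Phi$ is well posed) this ratio is strictly less than one. Setting $\tau := (q-1)/(p-1) \in [0,1)$ therefore verifies the hypothesis $\kappa(S_\Phi) \leq \tau < 1$ of Theorem~\ref{main thm}, and that theorem delivers a unique critical point $X \in H^n_{++}$ with $f_\Phi(X) = \spqnorm{\Phi}$ along with convergence of the power iteration from every starting point in $H^n_+$.

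The step I anticipate as the main obstacle is a rigorous derivation of $\kappa(J_{S_r}) = r - 1$ for matrix arguments, because the functional calculus for $A^{r-1}$ does not commute with the conjugations $B^{-1/2} \cdot B^{-1/2}$ that appear in the closed form of $d$, so one needs a genuine spectral argument rather than the naive scalar computation. Everything else is bookkeeping: the submultiplicativity step is one line, the Birkhoff--Hopf bound on $\kappa(\Phi)$ and $\kappa(\Phi^*)$ is quoted, and the application of Theorem~\ref{main thm} is direct.
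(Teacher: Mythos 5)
Your proposal is correct and follows essentially the same route as the paper: submultiplicativity of $\kappa$ along the composition $S_\Phi = J_{S_{p^*}} \circ \Phi^* \circ J_{S_q} \circ \Phi$, the identity $\kappa(J_{S_r}) = r-1$ giving the factor $(q-1)/(p-1)$, the Birkhoff--Hopf bound $\kappa(\Phi), \kappa(\Phi^*) \leq 1$, and then Theorem~\ref{main thm}. The only difference is that you spell out (and honestly flag the difficulty of) the step $\kappa(J_{S_r}) = r-1$, which the paper simply asserts without proof.
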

So this result is close to result for classical setting of computing $p \rightarrow q $ norm for nonnegative matrices (Theorem \ref{thm 1}) and we can see the similarity between irreduciblity in matrices and fully indecomposablity in \cpm s but it's remained to generalize the result for $p = q $ case, this case isn't proven yet because it's possible to have $ \kappa(\Phi ^*) = \kappa(\Phi) = 1 $ then $S_\Phi$ isn't a contraction and Banach fixed point theorem isn't useful.\\
So it's interesting to find the biggest set of \cpm s $\Phi$ with $\kappa (\Phi) < 1$.
However we finish this note by a generalization for the case $ p = q$ for positively improving maps.

\begin{defi}
Let $\phicn$ be a \cpm \vspace{0.1cm} then it is positively improving if for every $A \in \hn{}$ we have $ \Phi(A) \in H^n_{++} $. 
\end{defi}
\begin{thm}
If $\phicn$ be a positively improving \cpm \vspace{0.1cm} then $\kappa(\Phi) < 1 $. 
\end{thm}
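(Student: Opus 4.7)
The strategy is to apply the Birkhoff--Hopf theorem, which identifies $\kappa(\Phi) = \tanh(\Delta(\Phi)/4)$; since $\tanh(t) < 1$ for every finite $t$, it suffices to show that the projective diameter $\Delta(\Phi) = \sup\{d(\Phi(A),\Phi(B)) : A,B \in H^n_+,\ A \sim B\}$ is finite. Because $d$ is scale invariant ($d(\alpha X,\beta Y) = d(X,Y)$ for $\alpha,\beta>0$), every nonzero $A \in H^n_+$ is projectively equivalent to an element of the compact slice $K = \{A \in H^n_+ : \operatorname{tr}(A) = 1\}$, so it suffices to bound $d(\Phi(A),\Phi(B))$ uniformly for $A,B \in K$.

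Next I would exploit the positively improving hypothesis together with compactness. The map $\Phi$ is linear and hence continuous, so $\Phi(K)$ is a compact subset of the open cone $H^n_{++}$. The continuous functions $A \mapsto \lambda_{\min}(\Phi(A))$ and $A \mapsto \lambda_{\max}(\Phi(A))$ therefore attain a strictly positive minimum $\mu > 0$ and a finite maximum $M < \infty$ on $K$, so that $\mu I \preceq \Phi(A) \preceq M I$ for every $A \in K$.

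Using the closed-form expression for the Hilbert metric on positive definite matrices, $d(X,I) = \ln(\lambda_{\max}(X)/\lambda_{\min}(X))$ for $X \succ 0$, the two-sided sandwich above yields $d(\Phi(A),I) \leq \ln(M/\mu)$ for every $A \in K$. The triangle inequality for $d$ then gives
$$ d(\Phi(A),\Phi(B)) \leq d(\Phi(A),I) + d(I,\Phi(B)) \leq 2\ln(M/\mu) $$
for all $A,B \in K$, so $\Delta(\Phi) \leq 2\ln(M/\mu) < \infty$, and Birkhoff--Hopf closes the argument. The only mildly delicate point is ensuring $\mu > 0$: this rests on the compactness of $K$ in the finite-dimensional space $H^n$ (it is closed and bounded) and on the fact that positively improving delivers strict positivity of $\lambda_{\min}\circ\Phi$ on $K$; the rest is routine manipulation of the Hilbert metric.
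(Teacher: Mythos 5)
Your proposal is correct and follows essentially the same route as the paper: both invoke Birkhoff--Hopf, reduce by scale invariance to the compact trace-one slice of $H^n_+$, and use continuity of $\Phi$ plus the positively improving hypothesis to get uniform bounds $0 < \mu \leq \lambda_{\min}(\Phi(A))$ and $\lambda_{\max}(\Phi(A)) \leq M < \infty$, which bound the projective diameter. Your detour through $d(\Phi(A),I)$ and the triangle inequality is only a cosmetic variant of the paper's direct estimate $\pnorm{\Phi(B)^{-\frac12}\Phi(A)\Phi(B)^{-\frac12}}{} \leq \lambda_{\max}(\Phi(A))/\lambda_{\min}(\Phi(B))$.
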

\begin{proof}
From Birkhoff-Hopf theorem it's enough to prove $\Delta (\Phi) < \infty $ and also it's enough to prove $\pnorm{{\Phi(B)^{-\frac12}\Phi(A)\Phi(B)^{-\frac12}}}{} < \infty $ for every $A,B \in \hn{}$ with $trace(A) = trace(B) = 1 $ because Hilbert metric is defined on rays and also by definition $\Phi(B), \Phi(A)$ are positive definite so $\Phi(B)^{-\frac12}$ exist. It's known for every $A$ and $B \in H^n_+$ we have $\pnorm{AB}{} \leq \pnorm{A}{}\pnorm{B}{}$ so $\pnorm{{\Phi(B)^{-\frac12}\Phi(A)\Phi(B)^{-\frac12}}}{} \leq {\lambda_{max}(\Phi(A))} / {\lambda_{min}(\Phi(B))} $ where $\lambda_{max}$ and $\lambda_{min}$ are the largest and the smallest eigenvalues. So it's enough to show there exist numbers $c,C > 0$ such that $ \lambda_{max}(\Phi(A)) < C $ and $\lambda_{min}(\Phi(A)) > c $ for any $A \in \hn{} $ with $trace(A) = 1 $, for proving this note that $\Phi$ is a continuous function on the compact set $\{A : A \in \hn{} , trace(A) = 1 \}$ so the image of $\Phi $ is also compact so $ \lambda_{max}(\Phi(A)) < C $ and also we know the image is a subset of inside of positive semidefinite cone that contains it's boundary so the image of $\Phi$ has a positive distance $c > 0$ with the boundary and it means for every matrix $A$ the matrix $\Phi (A) - cI $ is positive definite and it implies $\lambda_{min} (\Phi (A)) > c $.
\end{proof}

\end{document}